\documentclass[12pt,onesided]{article}

\usepackage[leqno]{amsmath}
\usepackage{amsthm}
\usepackage{amssymb}
\usepackage{amsfonts}
\usepackage{array}
\usepackage{hyperref}

\pagestyle{plain}

\topmargin-10mm
\textheight230mm
\textwidth160mm
\oddsidemargin0mm

\sloppy

\def \beq{\begin{equation}}
\def \eeq{\end{equation}}
\def \beqa{\begin{eqnarray}}
\def \eeqa{\end{eqnarray}}
\def \N{\mathbb{N}}

%\numberwithin{equation}{section}

\newtheorem{conjecture}{CONJECTURE}
\newtheorem{theorem}{THEOREM}
\newtheorem{corollary}{COROLLARY}
\newtheorem{proposition}{PROPOSITION}
\newtheorem{lemma}{LEMMA}
\newtheorem{definition}{DEFINITION}

\newtheorem{example}{EXAMPLE}

\def\bi{\begin{itemize}}
\def\ei{\end{itemize}}

\setcounter{tocdepth}{3}

%%%%%%%%%%%%%%%%%%%%%%%%%%%%%%%%%%%%%%%%%%%%%%%%%%%%%%%%%%%%
%%%%%%%%%%%%%%%%%%%%%%%%%%%%%%%%%%%%%%%%%%%%%%%%%%%%%%%%%%%%

\begin{document}

\title{Share at least half the numbers in a nontrivial LCM-closed set a nontrivial divisor?}
\author{Tom Fischer\thanks{Institute of Mathematics, University of Wuerzburg, 
Emil-Fischer-Strasse 30, 97074 Wuerzburg, Germany.
Phone: +49 931 3188911.
E-mail: {\tt tom.fischer@uni-wuerzburg.de}.
}\\
University of Wuerzburg}
\date{\today}

\maketitle

\begin{abstract}
For a finite set of non-zero natural numbers that contains at least one element different from 1
and the least common multiple of any of its subsets, there exists a subset of at least half of its members
which has a common divisor larger than 1. 
Utilizing a representation of the natural numbers as an order-theoretical ring of prime power sets,
this conjecture is shown to be equivalent to Frankl's union-closed sets conjecture.
Some results for cases where the conjecture, which also has meaningful interpretations in graph and lattice theory, 
is known to hold are provided.
An equivalent dual version of the conjecture is, that for a finite set of non-zero natural numbers that contains at least two elements
and the greatest common divisor of any of its subsets,
one of its members has a prime power that is not a prime power of more than half of the members.
\end{abstract}

\noindent{\bf Key words:} 
GCD-closed, abundant divisor, LCM-closed, least common multiple, union-closed sets conjecture.\\

\noindent{\bf MSC2010:} 05C25, 11N99 %, 37P35(\textbf{??})
%05-xx Combinatorics
%05C25 Graphs and abstract algebra (groups, rings, fields, etc.)
%11Nxx Multiplicative Number Theory
%11N99   	None of the above, but in this section
%37P35   	Arithmetic properties of periodic points
%%37A45   	Relations with number theory and harmonic analysis

%\tableofcontents

%%%%%%%%%%%%%%%%%%%%%%%%%%%%%%%%%%%%%%%%%%%%%%%%%%%%%%%%%%%%%%%%%%%%%%%%%%%%%%%%%%%%%%%%%%%%%%%%%%%%%%%%%%%%
%%%%%%%%%%%%%%%%%%%%%%%%%%%%%%%%%%%%%%%%%%%%%%%%%%%%%%%%%%%%%%%%%%%%%%%%%%%%%%%%%%%%%%%%%%%%%%%%%%%%%%%%%%%%

\section{Preliminaries}

For $i\in\mathbb{N}^+=\mathbb{N}\setminus\{0\}$, let $p_i$ denote the $i$-th prime number. 
Ubiquitously known as the Fundamental Theorem of Arithmetic (see \cite{CFG} for an early reference), 
there exists a uniquely determined injective function
\beqa
\label{eq:1}
q: \; 
\mathbb{N}^+ & \longrightarrow & \mathbb{N}^{\mathbb{N}^+} \\
\nonumber
n & \longmapsto & (q_1(n), q_2(n), \ldots)
\eeqa
such that
\beqa
\label{eq:2}
n & = & \prod_{i\in\mathbb{N}^+}(p_i)^{q_i(n)} .
\eeqa
Ignoring factors equal one, the right hand side of \eqref{eq:2} is called the prime factorization of $n$. 
With the exception of $q(1) = (0,0,\ldots)$, the members of $q(\N^+)$ are the finite sequences of $\N^{\N^+}$ in 
the sense that any element of $q(\N^+)$ is constant zero from certain member 
of the sequence onwards.
Thus, $q$ is a by \eqref{eq:2} uniquely determined bijection between the non-zero natural numbers and the 
finite sequences of elements of $\N$ in the thus explained sense, including the sequence of zeros only.

Also well known is that the least common multiple (LCM) of $m$ numbers $n_j\in\mathbb{N}^+$, 
$j=1,\ldots,m$, where $m\in\mathbb{N}^+\setminus\{1\}$, is given by
\beqa
\label{eq:3}
\text{lcm}(n_1,\ldots,n_m) & = & \prod_{i\in\mathbb{N}^+}(p_i)^{\max\{q_i(n_1),\ldots,q_i(n_m)\}} ,
\eeqa
and their greatest common divisor (GCD) by
\beqa
\label{eq:4}
\text{gcd}(n_1,\ldots,n_m) & = & \prod_{i\in\mathbb{N}^+}(p_i)^{\min\{q_i(n_1),\ldots,q_i(n_m)\}} .
\eeqa
For finite $\mathcal{N}\subset\N^+$ with $\#\mathcal{N} > 1$, the notation $\text{lcm}(\mathcal{N})$ and
$\text{gcd}(\mathcal{N})$ with the obvious from \eqref{eq:3} and \eqref{eq:4} derived meaning will be used.

%%%%%%%%%%%%%%%%%%%%%%%%%%%%%%%%%%%%%%%%%%%%%%%%%%%%%%%%%%%%%%%%%%%%%%%%%%%%%%%%%%%%%%%%%%%%%%%%%%%%%%%%%%%%
%%%%%%%%%%%%%%%%%%%%%%%%%%%%%%%%%%%%%%%%%%%%%%%%%%%%%%%%%%%%%%%%%%%%%%%%%%%%%%%%%%%%%%%%%%%%%%%%%%%%%%%%%%%%

\section{An LCM-closed sets conjecture}

In the following, the subset symbol $\subset$ includes equality.

\begin{definition}
\label{def:1}
Nonempty
$\mathcal{N}\subset\mathbb{N}^+$ is LCM-closed, respectively GCD-closed, if $m,n\in\mathcal{N}$
implies $\text{\em lcm}(m,n)\in\mathcal{N}$, respectively $\text{\em gcd}(m,n)\in\mathcal{N}$.
\end{definition}

By induction, it is fairly obvious that the properties LCM-closed,
or GCD-closed, apply to finite subsets of an LCM-closed, respectively GCD-closed, set $\mathcal{N}$
in the sense that $\mathcal{N}\subset\mathbb{N}^+$ is LCM-closed, respectively GCD-closed, if and only if
$\mathcal{M}\subset\mathcal{N}$
implies $\text{lcm}(\mathcal{M})\in\mathcal{N}$, respectively $\text{gcd}(\mathcal{M})\in\mathcal{N}$,
for any finite $\mathcal{M}$.

\begin{example}
\label{ex:0}
~
{\em 
\begin{enumerate}
\item
$\mathcal{N}=\{1,2,3,4,6,12\}$ is LCM- and GCD-closed. 
\item
$\mathcal{N}=\{2,3,4,6,12\}$ is LCM-closed, but not GCD-closed. 
\item
$\mathcal{N}=\{1,2,3,4,6,8,12\}$ is GCD-closed, but not LCM-closed. 
\end{enumerate}
}
\end{example}

\begin{example}
\label{ex:00}
{\em 
Consider a dynamical system on a nonempty set $X$ given by a map $s: X \rightarrow X$. 
A nonempty subset $A\subset X$ is periodic if
$P(A) = \{n\in\N^+\setminus\{1\}: s^n(x) = x \text{ for all } x\in A\} \neq \emptyset$,
and $P_f(A) := \min P(A)$ is then called the fundamental period of $A$.
Given $P(A) \neq \emptyset$, the set $\{P_f(B) : B \subset A\}$
is LCM-closed, and $P_f(A)$ is its maximum.
If nonempty, $\{P_f(A) : A \subset X\}$ is LCM-closed, but potentially infinite.
}
\end{example}

\begin{conjecture}
\label{con:1}
For an LCM-closed finite set of non-zero natural numbers that contains at least one element
different from 1, there exists a divisor larger than 1 for a subset of at least half of its members.
\end{conjecture}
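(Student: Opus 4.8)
The plan is to reduce the statement to the existence of a single \emph{abundant prime} and then to attack that reformulation directly. First I would record, using \eqref{eq:4}, that a subset $\mathcal{S}\subset\mathcal{N}$ satisfies $\gcd(\mathcal{S})>1$ precisely when some prime $p_i$ divides every member of $\mathcal{S}$; hence a common divisor larger than $1$ exists for at least half the members if and only if there is an index $i$ with $d_i:=\#\{n\in\mathcal{N}:q_i(n)\ge 1\}\ge\tfrac12\#\mathcal{N}$. Thus the whole conjecture is the single assertion $\max_i d_i\ge\tfrac12\#\mathcal{N}$. By \eqref{eq:3} the identity $q_i(\mathrm{lcm}(m,n))=\max\{q_i(m),q_i(n)\}$ shows that each divisibility class $\{n\in\mathcal{N}:p_i\mid n\}$ is closed upward under taking lcm, and, more usefully, that $p_i^k\mid n$ forces $p_i\mid n$, so in each prime the prime itself is at least as abundant as any of its higher powers; it therefore suffices to locate any abundant prime power.

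The core attempt is an induction on the number $r$ of primes dividing the top element $M:=\mathrm{lcm}(\mathcal{N})\in\mathcal{N}$. When $r=1$ the set lies in $\{1,p,p^2,\dots\}$, so $p$ divides every member except possibly $1$, giving $d_1\ge\#\mathcal{N}-1\ge\tfrac12\#\mathcal{N}$ for $\#\mathcal{N}\ge 2$; this base case is immediate. For the inductive step I would strip the $r$-th prime by passing from $\mathcal{N}$ to $\{\,n/p_r^{q_r(n)}:n\in\mathcal{N}\,\}$, which one checks from \eqref{eq:3} is again LCM-closed on $r-1$ primes, and try to control how the counts $d_i$ transfer. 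The main obstacle, which I expect to be the decisive one, lies exactly here: this deletion can collapse distinct members, and averaging buys nothing, since $\max_i d_i\ge\frac1r\sum_i d_i=\frac{\#\mathcal{N}}{r}\bar\omega$, where $\bar\omega$ is the mean number of distinct prime divisors of a member, and this reaches $\tfrac12\#\mathcal{N}$ only when $\bar\omega\ge r/2$, which fails on sparse symmetric examples. No local or counting certificate is known that pins the maximum to one prime, which is precisely the point at which the argument meets the full strength of the problem, the statement being, as noted in the abstract, equivalent to Frankl's union-closed sets conjecture.

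To obtain genuine results I would therefore isolate the sub-cases where the threshold can be forced unconditionally. Besides the single-prime case above, the cleanest is when $\mathcal{N}$ is also GCD-closed: by \eqref{eq:3} and \eqref{eq:4} the injection $n\mapsto\{(i,k):1\le k\le q_i(n)\}$ carries lcm to union and gcd to intersection, so $\mathcal{N}$ becomes a family of finite sets closed under both, i.e.\ a sublattice of a Boolean lattice and hence a finite \emph{distributive} lattice, for which Frankl's conjecture is a theorem; translating its abundant join-irreducible back through the monotonicity of the first paragraph yields an abundant prime. Two further cases transfer verbatim from what is known for union-closed families: if the number of prime-power bricks $\sum_i q_i(M)$ is at most $12$ the small-ground-set result applies, and if $\mathcal{N}$ contains a member $n\ne 1$ with $\sum_i q_i(n)\le 2$ (that is, a prime, a prime square, or a product of two distinct primes) the result for families containing a set of size at most two applies. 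The write-up would thus comprise the reduction of the first paragraph, the elementary single-prime case, the distributive (GCD-and-LCM-closed) case, and these transferred cases, together with an explicit statement that the residual pure join-semilattice situation coincides with an open problem, so that any complete direct proof must introduce a genuinely new mechanism for beating the $\tfrac12$ barrier on a single prime.
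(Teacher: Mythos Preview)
The statement is labelled a conjecture, and the paper does not prove it: its contribution is Theorem~\ref{theo:1}, establishing equivalence with Frankl's union-closed sets conjecture, together with the list in Section~\ref{Examples of scope} of conditions under which the conjecture is known to hold. Your proposal is consistent with this. You correctly reduce to the existence of an abundant prime (the paper makes the same remark immediately after Conjecture~\ref{con:1}), you attempt a direct induction on the number of primes dividing $\text{lcm}(\mathcal{N})$ and correctly diagnose why the collapsing of members under prime-stripping blocks it, and you then explicitly acknowledge that the residual case is Frankl's open problem. So there is no gap in your reasoning; there is simply no proof, here or in the paper.

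Your catalogue of tractable sub-cases largely overlaps with the paper's Section~\ref{Examples of scope}: your ``member with at most two prime-power bricks'' is the paper's item~1, and your ``at most twelve bricks in the top element'' is the paper's item~2 (the paper sharpens this slightly by first dividing through by $\text{gcd}(\mathcal{N})$). The paper records three further transferred conditions you omit, namely $\#\mathcal{N}\le 50$, $\#\mathcal{N}\ge \tfrac{2}{3}\,2^{\sigma_{\text{PPE}}(\text{lcm}(\mathcal{N})/\text{gcd}(\mathcal{N}))}$, and the separating case. Conversely, your treatment of the simultaneously LCM- and GCD-closed case via the known distributive-lattice instance of Frankl's conjecture is a correct addition not present in the paper.
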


Such a nontrivial divisor will be called abundant in the considered LCM-closed set.
In Conjecture \ref{con:1}, ``divisor larger than 1'' could obviously be replaced by ``prime factor''.

\begin{example}
\label{ex:1}
{\em 
The set $\mathcal{N}=\{1,2,3,4,6,8,12,24\}$ of eight elements is LCM-closed. 
The subset $\{2,4,6,8,12,24\}$ of six elements has the greatest common divisor 2. 
}
\end{example}

The next example shows that an abundant divisor in an LCM-closed set neither has to be a member of the set,
nor does it have to be prime. 

\begin{example}
\label{ex:4asx}
{\em 
The set $\mathcal{N}=\{6,10,14,30,42,70,210\}$ is LCM-closed, but not GCD-closed. 
The numbers 2, 3, 5, 6, 7, 10, and 14 are abundant nontrivial divisors.
}
\end{example}

The following set illustrates that not each prime factor in $\text{lcm}(\mathcal{N})$ must be 
an abundant divisor in an LCM-closed set $\mathcal{N}$.

\begin{example}
\label{ex:4basx}
{\em 
The set $\mathcal{N}=\{2,6,30\}$ is LCM-closed, but $5$ only divides element $30$.
}
\end{example}

The main result of this note will be the equivalence of Conjecture \ref{con:1} to the union-closed sets conjecture,
which -- according to \cite{BS} and despite of a plethora of articles relating to it -- has resisted proof since at least 1979.
While connections to graph theory (e.g.~\cite{K}) and lattice theory (e.g.~\cite{K} and \cite{P}) are well known (see \cite{BS} for summaries), 
no relations to number theory seem to have been drawn in the past.

%%%%%%%%%%%%%%%%%%%%%%%%%%%%%%%%%%%%%%%%%%%%%%%%%%%%%%%%%%%%%%%%%%%%%%%%%%%%%%%%%%%%%%%%%%%%%%%%%%%%%%%%%%%%
%%%%%%%%%%%%%%%%%%%%%%%%%%%%%%%%%%%%%%%%%%%%%%%%%%%%%%%%%%%%%%%%%%%%%%%%%%%%%%%%%%%%%%%%%%%%%%%%%%%%%%%%%%%%

\section{Frankl's union-closed sets conjecture}

\label{Frankl's union-closed sets conjecture}

\begin{definition}
\label{def:2qwe}
A family (``system'') $\mathcal{S}$ of sets is union-closed, respectively intersection-closed, if $A,B\in\mathcal{S}$
implies $A\cup B\in\mathcal{S}$, respectively $A\cap B\in\mathcal{S}$.

An order-theoretical ring is a family of sets, which is simultaneously union- and intersection-closed.
\end{definition}

Similar to Definition \ref{def:1}, 
a family $\mathcal{S}$ of sets is union-closed, respectively intersection-closed, if and only if it holds for finite 
$\tilde{\mathcal{S}}\subset\mathcal{S}$ that $\bigcup\tilde{\mathcal{S}}\in\mathcal{S}$, 
respectively $\bigcap\tilde{\mathcal{S}}\in\mathcal{S}$. Obviously, any finite order-theoretical ring
is a complete lattice (by the order of inclusion) with intersection as the meet operation and union as the join operation.\\

\noindent
\textbf{Union-closed sets conjecture.}
{\em 
For a finite union-closed family of sets with at least one nonempty member, there exists an element
shared by at least half of the member sets.\\
}

Bruhn and Schaudt \cite{BS} is a fairly recent and very thorough overview article on 
the Frankl conjecture, which is why no comprehensive summary of conditions, under which the 
conjecture is known to hold, needs to be provided here. A few selected conditions will be given below.
However, besides a simple example further down,
a few basic tools and facts regarding the conjecture should be pointed out to readers who are unfamiliar with it.

Considering a union-closed family of sets, it makes sense to identify elements that lie in exactly the same sets,
thus considering them as one single element. Since in any here considered union-closed system $\mathcal{S}$ 
the maximal set (using the order of inclusion; this set is also called the ``universe'') is finite,
elements of member sets can be identified with natural numbers. For this, let the maximal set be 
$S_{\max} = \{1,2,\ldots,m\}$, $m\in\N^+$. Now, all
members of $\mathcal{S}$ are subsets of natural numbers in $S_{\max}$, where the empty set can be
a member, as well. It is easy to show -- and widely known -- that union-closed systems that either have
a singleton member set, or a member set with only two elements, adhere to the conjecture.

\begin{example}
\label{ex:5rtu}
{\em
The following is a union-closed system with eight member sets:
\beqa
\mathcal{S} & = & \{\emptyset, \{1\} , \{1,2\} , \{1,2,3\}, \{4\} , \{1,4\} , \{1,2,4\}, \{1,2,3,4\} \} .
\eeqa
The elements 1, 2, and 4 are abundant. The universe is  $S_{\max} = \{1,2,3,4\}$. Note that, for instance,
the member sets $\emptyset$, $\{1\}$, $\{1,2\}$, $\{1,2,3\}$, and $\{4\}$ could each, or all, be removed, 
while the system would remain union-closed.
}
\end{example}

A few cases where the conjecture is known to hold are (see \cite{BS} and references therein):
\begin{enumerate}
\item
$\mathcal{S}$ has a member with only one or with only two elements.
\item
$\#S_{\max} \leq 12$. 
\item
$\#\mathcal{S} \leq 50$.
\item
$\#\mathcal{S} \geq \frac{2}{3}2^{\#S_{\max}}$.
\item
$\#\mathcal{S} \leq 2(\#S_{\max})$, if $\mathcal{S}$ is such that for any $x,y\in S_{\max}$
there exist $A,B\in\mathcal{S}$ such that $x\in A$ and $y\in B$, but $x,y\notin A\cap B$.
The system $\mathcal{S}$ is then called separating.
\end{enumerate}

%%%%%%%%%%%%%%%%%%%%%%%%%%%%%%%%%%%%%%%%%%%%%%%%%%%%%%%%%%%%%%%%%%%%%%%%%%%%%%%%%%%%%%%%%%%%%%%%%%%%%%%%%%%%
%%%%%%%%%%%%%%%%%%%%%%%%%%%%%%%%%%%%%%%%%%%%%%%%%%%%%%%%%%%%%%%%%%%%%%%%%%%%%%%%%%%%%%%%%%%%%%%%%%%%%%%%%%%%

\section{$\N^+$ as a ring of prime power sets}

In preparation for the proof of the main result, which establishes equivalence of Conjecture \ref{con:1} and 
the union-closed sets conjecture, this section examines how the natural numbers can be
considered as an order-theoretical ring with regard to the operations ``$\text{lcm}$'' and ``$\text{gcd}$''.
Considerations similar to the ones carried out here, but regarding natural numbers as multisets of their prime
factors, may be familiar to the reader and may indeed be considered as folklore knowledge. This remark extends
to Lemma \ref{lem:1} below. However, it is better for the purpose of this note to not use
multisets of prime factors, and instead use (proper) sets of prime powers, which -- as a slightly different setup -- 
justifies the more detailed explanations provided here.

With $\mathcal{P}(\N^+)$ the power set of $\N^+$, define
\beqa
\label{eq:5}
f: \; 
\N^+ & \longrightarrow & \mathcal{P}(\N^+) \\
\nonumber
n & \longmapsto & \bigcup_{i\in\mathbb{N}^+} \{p_i,(p_i)^2,\ldots,(p_i)^{q_i(n)}\} ,
\eeqa
with the convention $\{p_i,(p_i)^2,\ldots,(p_i)^{q_i(n)}\} = \emptyset$ for $q_i(n) = 0$. The 
function $f$, where $f(1)=\emptyset$, maps any natural number $n>0$ to the set of its (highest) prime powers 
(naturally with exponents of at least 1), with the additional feature,
that for any prime in the factorization of $n$, all powers of this prime below the highest one are included into the set as well.

\begin{example}
{\em 
$f(18) = f(2\cdot 3^2) = \{2, 3, 3^2\}$, but $\{2, 3^2\}, \{2, 3, 3^3\}\notin f(\N^+)$.
$f(16) = f(2^4) = \{2, 2^2, 2^3, 2^4\}$, however, for instance, $\{2^4\}, \{2, 2^4\}, \{2^2, 2^4\}, \{2^3, 2^4\}$, 
$\{2, 2^2, 2^4\}, \{2^2, 2^3, 2^4\} \notin f(\N^+)$.
}
\end{example}

Clearly, $f$ is an injection with the inverse
\beqa
\label{eq:5a}
g := f^{-1}: \; 
f(\N^+) & \longrightarrow & \N^+ ,
\eeqa
which maps finite sets of prime powers (where prime powers below the maximal one are included) to their product, i.e.~to the
natural number with the corresponding prime factorization. 
More precisely, $g$ maps any member set of $f(\N^+)$ to the product of its maximal prime powers (with $g(\emptyset) = 1$),
and
\beqa
\label{eq:5b}
f: \; 
\N^+ & \longrightarrow & f(\N^+) 
\eeqa
is a bijection with inverse $g$.

\begin{example}
{\em 
$g(\{2, 3, 3^2\}) = g(f(18)) = 2 \cdot 3^2 = 18$ and $g(\{2, 2^2, 2^3, 2^4\}) = 2^4 = 16$.
}
\end{example}

\begin{lemma}
\label{lem:1}
For $m\in\mathbb{N}^+\setminus\{1\}$ and $n_j\in\mathbb{N}^+$ for $j=1,\ldots,m$,
\beqa
\label{eq:6}
f(\text{\em lcm}(n_1,\ldots,n_m)) & = & \bigcup_{j\in\{1,\ldots,m\}} f(n_j) , \\
\label{eq:7}
f(\text{\em gcd}(n_1,\ldots,n_m)) & = & \bigcap_{j\in\{1,\ldots,m\}} f(n_j) . 
\eeqa
For $m,n\in\N^+$,
\beqa
\label{eq:9wvi}
m|n & \Leftrightarrow & f(m) \subset f(n) ,\\
\label{eq:11wvi}
q = \text{\em lcm}(m,n) & \Leftrightarrow & f(q) = f(m) \cup f(n) ,\\
\label{eq:12wvi}
q = \text{\em gcd}(m,n) & \Leftrightarrow & f(q) = f(m) \cap f(n) ,\\
\label{eq:13wvi}
n \text{\em ~prime} & \Leftrightarrow & f(n) \text{\em ~inclusion-minimal in } f(\N^+) . 
\eeqa
\end{lemma}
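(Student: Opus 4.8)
The plan is to prove the five equivalences of Lemma \ref{lem:1} by reducing every statement to a pointwise comparison of the exponent sequences $q_i(\cdot)$, using the explicit formulas \eqref{eq:3} and \eqref{eq:4} together with the definition \eqref{eq:5} of $f$. The key observation that makes everything routine is that, by construction, $(p_i)^k \in f(n)$ if and only if $1 \le k \le q_i(n)$; in other words, for each prime $p_i$ the set $f(n)$ records exactly the ``initial segment'' of powers $\{p_i, (p_i)^2, \ldots\}$ of length $q_i(n)$. Consequently, the membership of $f(n)$ in each prime $p_i$ is completely determined by the single number $q_i(n)$, and set-theoretic operations on the $f(n_j)$ act coordinate-wise on these initial segments.

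First I would establish \eqref{eq:6}. Fix $i$ and $k \ge 1$. Then $(p_i)^k \in \bigcup_j f(n_j)$ iff $k \le q_i(n_j)$ for some $j$, iff $k \le \max_j q_i(n_j)$; by \eqref{eq:3} the right-hand exponent is precisely $q_i(\text{lcm}(n_1,\ldots,n_m))$, so $(p_i)^k \in f(\text{lcm}(n_1,\ldots,n_m))$. Since both sides are unions over $i$ of sets of powers of $p_i$, and no power $(p_i)^k$ with $k\ge 1$ can equal a power $(p_j)^\ell$ with $j\ne i$, the two sets coincide. The proof of \eqref{eq:7} is identical with $\max$ replaced by $\min$ and $\bigcup$ by $\bigcap$, invoking \eqref{eq:4}.

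Next, \eqref{eq:9wvi}: $m \mid n$ holds iff $q_i(m) \le q_i(n)$ for all $i$ (immediate from \eqref{eq:2} and uniqueness of prime factorization), which by the initial-segment description of $f$ is exactly $f(m) \subset f(n)$. The two displayed characterizations \eqref{eq:11wvi} and \eqref{eq:12wvi} are then the $m=2$ specializations of \eqref{eq:6} and \eqref{eq:7} combined with injectivity of $f$ (Section \ref{Frankl's union-closed sets conjecture}'s setup gives $g = f^{-1}$ on $f(\N^+)$): if $q = \text{lcm}(m,n)$ then $f(q) = f(m)\cup f(n)$ by \eqref{eq:6}, and conversely if $f(q) = f(m)\cup f(n) = f(\text{lcm}(m,n))$ then $q = \text{lcm}(m,n)$ since $f$ is injective. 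Finally, for \eqref{eq:13wvi}: $n$ is prime iff $n = p_i$ for some $i$, iff $f(n) = \{p_i\}$ is a singleton. One direction is then clear, since $\emptyset = f(1) \subsetneq \{p_i\}$ shows no nonempty proper subset of $\{p_i\}$ lies strictly below it in $f(\N^+)$; wait—rather, $f(n)$ is inclusion-minimal among the \emph{nonempty} members, but note $f(\N^+)$ contains $\emptyset = f(1)$, so ``inclusion-minimal'' here must be understood relative to the nonempty members, which is the natural reading since $1$ has no prime factors. For the converse, if $f(n)$ is nonempty and not a singleton prime $\{p_i\}$, then either $f(n)$ contains two distinct maximal prime powers $(p_i)^a, (p_j)^b$ with $i \ne j$, in which case $f(p_i) = \{p_i\} \subsetneq f(n)$, or $f(n)$ contains $(p_i)^a$ with $a \ge 2$, in which case again $\{p_i\} = f(p_i) \subsetneq f(n)$ (here I use that $(p_i)^2 \in f(n)$ forces $p_i \in f(n)$, the initial-segment property); either way $f(n)$ is not inclusion-minimal, completing the contrapositive.

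The main obstacle is not mathematical depth but bookkeeping precision: one must be careful that the notion ``inclusion-minimal in $f(\N^+)$'' in \eqref{eq:13wvi} is applied to the nonempty members (otherwise $\emptyset$ would be the unique minimal element and the statement would be vacuous), and one must repeatedly exploit the special shape of the members of $f(\N^+)$—namely that they are unions of downward-closed sets of powers of distinct primes—which is exactly the feature that distinguishes this prime-power-set representation from the more familiar multiset-of-prime-factors picture mentioned in the text. Everything else is a direct translation between exponent inequalities and set inclusions.
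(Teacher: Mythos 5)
Your proof is correct and takes essentially the same route as the paper's own (very terse) proof, which simply asserts that everything follows from \eqref{eq:3}, \eqref{eq:4}, and the definition of $f$; you merely fill in the coordinate-wise comparison of exponents that the paper leaves implicit. Your added observation that inclusion-minimality in \eqref{eq:13wvi} must be read relative to the nonempty members of $f(\N^+)$ (since $f(1)=\emptyset$ is a strict subset of every other member) is a legitimate precision that the paper's proof only gestures at with the remark ``since $f(1)=\emptyset$.''
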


\begin{proof}
Equations \eqref{eq:6} and \eqref{eq:7} follow in a straightforward manner from \eqref{eq:3} and \eqref{eq:4}.
Equivalence \eqref{eq:9wvi} holds since a divisor is the product of a subset of primes (prime powers) in the factorization 
of the number it divides. 
The equivalences \eqref{eq:11wvi} and \eqref{eq:12wvi} follow directly with \eqref{eq:6} and \eqref{eq:7}.
Statement \eqref{eq:13wvi} holds because of \eqref{eq:9wvi} and since $f(1) = \emptyset$.
\end{proof}

Obviously,
\beqa
g(f(m)\cup f(n)) & = & \text{lcm}(m,n) ,\\
g(f(m)\cap f(n)) & = & \text{gcd}(m,n) .
\eeqa

\begin{corollary}
\label{cor:1}
~
\begin{enumerate}
\item
$\mathcal{N}\subset\mathbb{N}^+$ is LCM-closed, respectively GCD-closed, if and only if
$f(\mathcal{N})$ is union-closed, respectively intersection-closed.
\item
$f(\N^+)$ is an order-theoretical ring.
\end{enumerate}
\end{corollary}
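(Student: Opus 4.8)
The plan is to derive Corollary \ref{cor:1} directly from Lemma \ref{lem:1}, since both parts are essentially translations of the lemma's equivalences into the language of set systems. For part (1), I would argue as follows. Suppose $\mathcal{N}\subset\mathbb{N}^+$ is LCM-closed, and take two member sets $A,B\in f(\mathcal{N})$. By definition of $f(\mathcal{N})$, there are $m,n\in\mathcal{N}$ with $A=f(m)$ and $B=f(n)$. Since $\mathcal{N}$ is LCM-closed, $\text{lcm}(m,n)\in\mathcal{N}$, and by \eqref{eq:11wvi} (or equivalently \eqref{eq:6} with $m=2$) we have $f(\text{lcm}(m,n)) = f(m)\cup f(n) = A\cup B$, so $A\cup B\in f(\mathcal{N})$; hence $f(\mathcal{N})$ is union-closed. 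Conversely, if $f(\mathcal{N})$ is union-closed, take $m,n\in\mathcal{N}$; then $f(m)\cup f(n)\in f(\mathcal{N})$, so $f(m)\cup f(n) = f(k)$ for some $k\in\mathcal{N}$, and \eqref{eq:11wvi} gives $k=\text{lcm}(m,n)$, so $\text{lcm}(m,n)\in\mathcal{N}$. The argument for GCD-closed versus intersection-closed is identical with $\cup$ replaced by $\cap$, $\text{lcm}$ by $\text{gcd}$, and \eqref{eq:11wvi} by \eqref{eq:12wvi}. One should note that injectivity of $f$ (already established) is what makes the backward directions clean: the element $k$ with $f(k)=f(m)\cup f(n)$ is unique, so there is no ambiguity.

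For part (2), I would show that $f(\N^+)$ is closed under both union and intersection of any two of its members, which is exactly Definition \ref{def:2qwe}. Given $A=f(m)$ and $B=f(n)$ with $m,n\in\N^+$, equation \eqref{eq:6} (with $m=2$) yields $A\cup B = f(\text{lcm}(m,n))\in f(\N^+)$, and \eqref{eq:7} yields $A\cap B = f(\text{gcd}(m,n))\in f(\N^+)$. Thus $f(\N^+)$ is simultaneously union-closed and intersection-closed, i.e.\ an order-theoretical ring. Alternatively, part (2) is the special case $\mathcal{N}=\N^+$ of part (1), since $\N^+$ is trivially both LCM-closed and GCD-closed (being all of $\N^+$); I would mention this as the shorter route but perhaps spell out the direct computation for clarity.

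There is no real obstacle here: the corollary is a bookkeeping consequence of Lemma \ref{lem:1} together with the fact that $f$ is a bijection onto $f(\N^+)$. The only point requiring a modicum of care is to make sure that, in the backward direction of part (1), one genuinely uses the \emph{equivalence} in \eqref{eq:11wvi} and \eqref{eq:12wvi} (not merely the forward implications), so that membership of $f(m)\cup f(n)$ or $f(m)\cap f(n)$ in $f(\mathcal{N})$ actually forces the corresponding $\text{lcm}$ or $\text{gcd}$ to lie in $\mathcal{N}$ rather than some other number with the same prime-power set. Since $f$ is injective, no such other number exists, so this is automatic, but it is worth stating explicitly. I would therefore keep the proof to a few lines, citing \eqref{eq:6}, \eqref{eq:7}, \eqref{eq:11wvi}, \eqref{eq:12wvi}, and the bijectivity of $f$ from \eqref{eq:5b}.
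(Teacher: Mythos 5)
Your proposal is correct and follows essentially the same route as the paper: part (1) via the equivalences \eqref{eq:11wvi} and \eqref{eq:12wvi}, and part (2) as the special case $\mathcal{N}=\N^+$ of part (1), since $\N^+$ is both LCM- and GCD-closed. You merely spell out the details (including the role of injectivity of $f$ in the backward direction) that the paper leaves implicit.
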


\begin{proof}
The first statement follows with \eqref{eq:11wvi} and \eqref{eq:12wvi}. 
The second statement follows directly from the first one, since $\N^+$ is LCM- and GCD-closed.
\end{proof}

The bijection \eqref{eq:5b}
and its inverse $g$ have thus established an isomorphism between
the strictly positive natural numbers,
equipped with the operations ``$\text{lcm}$'' and ``$\text{gcd}$'',
and the order-theoretical ring $f(\N^+)$ of, essentially, the sets of prime powers of these numbers,
equipped with the union and intersection operation.
Certainly folklore knowledge, with the explanation below Definition \ref{def:2qwe}, any finite simultaneously
LCM- and GCD-closed set $\mathcal{N}\subset\N^+$ is a complete lattice with ``$\text{gcd}$'' as the meet
operation and ``$\text{lcm}$'' as the join operation. With \eqref{eq:9wvi}, it is clear that the order is given
by smaller elements dividing larger ones.

Note that while $\N^+$ with ``$\text{lcm}$'' is a commutative monoid (or abelian monoid; the neutral element is 1,
since $\text{lcm}(n,1) = 1$),  $\N^+$ with ``$\text{gcd}$'' is no monoid, such that neither an algebraic
ring structure, nor something similar (e.g.~a ring without the negative elements) can be established on $\N^+$ with
regard to the operations given by the LCM and the GCD. 

%%%%%%%%%%%%%%%%%%%%%%%%%%%%%%%%%%%%%%%%%%%%%%%%%%%%%%%%%%%%%%%%%%%%%%%%%%%%%%%%%%%%%%%%%%%%%%%%%%%%%%%%%%%%
%%%%%%%%%%%%%%%%%%%%%%%%%%%%%%%%%%%%%%%%%%%%%%%%%%%%%%%%%%%%%%%%%%%%%%%%%%%%%%%%%%%%%%%%%%%%%%%%%%%%%%%%%%%%

\section{Main result}

\label{Main result}

\begin{theorem}
\label{theo:1}
Conjecture \ref{con:1} is equivalent to the union-closed sets conjecture.
\end{theorem}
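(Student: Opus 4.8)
The plan is to prove both implications via the isomorphism established in Corollary \ref{cor:1}, so that the whole argument reduces to transporting the two conjectures across the bijection $f$ and its inverse $g$. The key technical observation is that ``being divisible by a fixed prime power'' on the number side corresponds to ``containing a fixed element'' on the set side, and that a prime factor (equivalently, by the remark after Conjecture \ref{con:1}, any nontrivial divisor) of a number $n$ corresponds, via \eqref{eq:13wvi}, to an inclusion-minimal element of $f(\N^+)$ lying in $f(n)$.

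First I would show that the union-closed sets conjecture implies Conjecture \ref{con:1}. Let $\mathcal{N}\subset\N^+$ be finite, LCM-closed, with some element $\neq 1$. By Corollary \ref{cor:1}(1), $\mathcal{S}:=f(\mathcal{N})$ is union-closed; it is finite, and since $\mathcal{N}$ contains an element $n\neq 1$, $f(n)\neq\emptyset$, so $\mathcal{S}$ has a nonempty member. The union-closed sets conjecture yields an element $x$ — a prime power, since every member of $f(\N^+)$ consists of prime powers — lying in at least half of the sets in $\mathcal{S}$. Writing $x=(p_i)^k$, the members of $\mathcal{N}$ whose image under $f$ contains $x$ are exactly those $n$ with $(p_i)^k\mid n$, hence also $p_i\mid n$; there are at least $\#\mathcal{N}/2$ of them, so $p_i$ is an abundant prime factor and Conjecture \ref{con:1} holds.

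Next I would prove the converse. Let $\mathcal{S}$ be a finite union-closed family with a nonempty member; after the standard reductions recalled in Section \ref{Frankl's union-closed sets conjecture} (identifying elements lying in exactly the same member sets, and taking the universe to be $\{1,\ldots,m\}$), I may assume $\mathcal{S}$ is a union-closed family of subsets of $\{1,\ldots,m\}$ with no two distinct elements contained in exactly the same members. The obstacle here is that a general such $\mathcal{S}$ is not literally of the form $f(\mathcal{N})$: members of $f(\N^+)$ are not arbitrary finite sets but sets of prime powers closed downward in each prime's chain. So the plan is to re-encode: pick distinct primes $p_1,\ldots,p_m$ and map the ground element $j$ to $p_j$, i.e.\ send each $A\in\mathcal{S}$ to $h(A):=\prod_{j\in A}p_j$ (with $h(\emptyset)=1$). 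Since the $p_j$ are distinct primes, $h(A)=g\big(\{p_j: j\in A\}\big)$ and every such set $\{p_j:j\in A\}$ does lie in $f(\N^+)$, with $h(A\cup B)=\mathrm{lcm}(h(A),h(B))$; thus $\mathcal{N}:=h(\mathcal{S})=\{h(A):A\in\mathcal{S}\}$ is a finite LCM-closed subset of $\N^+$, and it contains an element $\neq 1$ because $\mathcal{S}$ has a nonempty member. Applying Conjecture \ref{con:1} gives a prime factor $p$ dividing at least $\#\mathcal{N}/2$ elements of $\mathcal{N}$; this $p$ must be one of $p_1,\ldots,p_m$, say $p=p_j$ (any other prime divides none of the squarefree numbers $h(A)$), and $p_j\mid h(A)$ iff $j\in A$, so $j$ lies in at least half of the sets of $\mathcal{S}$, which is the union-closed sets conjecture.

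Two small points need care. First, in the forward direction the element $x$ produced by Frankl need not be a bottom element of $f(\N^+)$, i.e.\ it may be a higher prime power $(p_i)^k$ rather than $p_i$ itself; the remark after Conjecture \ref{con:1} (``divisor larger than $1$'' may be replaced by ``prime factor'', and conversely a prime power divisor forces its prime to divide) makes this harmless, and I would state that explicitly. Second, for the converse I should note that the harmless identifications used to bring $\mathcal{S}$ into ``no-repeated-elements, finite-universe'' form do not change which counts are at least half the family, so a solution there transfers back; and that $\#\mathcal{N}=\#\mathcal{S}$ because $h$ is injective on $\mathcal{S}$ (distinct subsets of $\{1,\ldots,m\}$ give distinct squarefree products). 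I expect the main obstacle to be precisely this mismatch between arbitrary finite sets and the downward-closed prime-power sets comprising $f(\N^+)$; restricting to squarefree numbers (equivalently, to the inclusion-minimal ``coordinates'' of the ring $f(\N^+)$) resolves it cleanly, at the cost of not using the full generality of $f$ in the backward direction.
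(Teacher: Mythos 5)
Your proposal is correct and follows essentially the same route as the paper: one direction applies Frankl to the union-closed system $f(\mathcal{N})$ and pulls the shared prime power back to an abundant divisor, and the other encodes a union-closed family as squarefree numbers by sending ground elements to distinct primes (exactly the paper's $i\mapsto p_i$ relabelling followed by $g$). Your explicit attention to the two small points (a shared element may be a higher prime power, and injectivity of the squarefree encoding) only makes precise what the paper leaves implicit.
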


\begin{proof}
$\Rightarrow:$
Assume that an arbitrary finite union-closed system, $\mathcal{S}$, with a maximal set 
$S_{\max} = \{1,2,\ldots,m\}$, $m\in\N^+$ is given. In a first step, replace all
sets of $\mathcal{S}$ with sets of corresponding prime numbers by means of replacing
any number $i\in\{1,\ldots,m\}$ with the $i$-th prime number $p_i$. For instance,
one now has $S_{\max} = \{p_1, p_2,\ldots,p_m\}$. By this procedure,
a set-theoretical isomorphism has been established between the two systems, which preserves
operations such as unions and intersections, or relationships such as inclusion. Clearly, under
this isomorphism, the system's set-related properties
such as union-closedness, or frequencies of the occurrence of elements in
member sets, remain unchanged. The elements of member sets of $\mathcal{S}$ are now prime numbers, 
so $\mathcal{S}\subset f(\N^+)$ and $g(\mathcal{S})\subset\N^+$.
Since $f$ is a bijection with inverse $g$ (cf.~\eqref{eq:5b}), $f(g(\mathcal{S})) = \mathcal{S}$.
By Corollary \ref{cor:1}, $g(\mathcal{S})$ is LCM-closed, since $\mathcal{S}$ is union-closed by assumption.
Under Conjecture \ref{con:1}, a subfamily of numbers
$\mathcal{N}\subset g(\mathcal{S})$ comprised of more than half the numbers in $g(\mathcal{S})$ 
shares a divisor larger than one, which implies that all numbers in $\mathcal{N}$ have at least one
common prime number (or even a product of primes)
in their corresponding factorizations. Thus, all members of $f(\mathcal{N})\subset\mathcal{S}$,
which are at least half of the members of $\mathcal{S}$, share this prime number 
(or these prime numbers in the corresponding product)
as an element (as elements).\\
$\Leftarrow:$
Assume that $\mathcal{N}\subset\mathbb{N}^+$ is nonempty, finite, LCM-closed, and contains at least
one element larger than 1.
By Corollary \ref{cor:1}, $\mathcal{S}:=f(\mathcal{N})$ is a finite, 
union-closed system with -- by \eqref{eq:5} -- at least one nonempty member.
Under the union-closed sets conjecture, a subfamily $\tilde{\mathcal{S}}\subset\mathcal{S}=f(\mathcal{N})$ contains
at least half of the members of  $\mathcal{S}=f(\mathcal{N})$, and they all share at least one element.
By construction, elements of member sets $S\in\mathcal{S}=f(\mathcal{N})$ are prime powers in the factorization
of $g(S)\in\mathcal{N}$. The restriction $f: \mathcal{N} \rightarrow f(\mathcal{N})$ being a bijection with inverse $g$, 
this means that at least half of the members of $\mathcal{N}$ share the same prime power, and thus have a non-trivial divisor.
\end{proof}

\begin{example}
{\em 
The two directions of the proof are illustrated.\\
$\Rightarrow:$
Using $\mathcal{S}$ of Example \ref{ex:5rtu} as an instance, the union-closed system
\beqa
\label{eq:17wer}
\mathcal{S} & = & \{\emptyset, \{1\} , \{1,2\} , \{1,2,3\}, \{4\} , \{1,4\} , \{1,2,4\}, \{1,2,3,4\} \} 
\eeqa
is, by $i \mapsto p_i$, first turned into the set-isomorphic
\beqa
\mathcal{S} & = & \{\emptyset, \{2\} , \{2,3\} , \{2,3,5\}, \{7\} , \{2,7\} , \{2,3,7\}, \{2,3,5,7\} \} ,
\eeqa
which produces the LCM-closed set (note that $210=2\cdot 3\cdot 5\cdot 7$)
\beqa
g(\mathcal{S}) & = & \{1, 2, 6, 30, 7, 14, 42, 210\} .
\eeqa
$\Leftarrow:$
Using $\mathcal{N}$ of Example \ref{ex:1} as an instance, the LCM-closed set
\beqa
\mathcal{N} 
& = & 
\{1,2,3,4,6,8,12,24\} \\
\nonumber
& = & 
\{1,2,3,2^2,2\cdot 3,2^3,2^2\cdot 3,2^3\cdot 3\}
\eeqa
is turned into the union-closed system
\beqa
\label{eq:21wer}
f(\mathcal{N}) & = & \{\emptyset, \{2\} , \{3\} , \{2,2^2\}, \{2,3\} , \{2,2^2,2^3\} , \{2,2^2,3\}, \{2,2^2,2^3,3\} \} .
\eeqa
The bijection
\beqa
1 &\longmapsto& 2 \\
\nonumber
2 &\longmapsto& 2^2 \\
\nonumber
3 &\longmapsto& 2^3 \\
\nonumber
4 &\longmapsto& 3
\eeqa
establishes, of course, a set-theoretical isomorphism between \eqref{eq:17wer} and \eqref{eq:21wer}, thus illustrating
that -- similar to names of elements being irrelevant with regard to the structure of union-closedness -- specific numbers
are irrelevant if it comes to the structure of LCM-closedness. The example therefore illustrates that the same LCM structure
and the same union-closedness structure can be represented by a multitude of sets of numbers or of families of sets.
}
\end{example}

%%%%%%%%%%%%%%%%%%%%%%%%%%%%%%%%%%%%%%%%%%%%%%%%%%%%%%%%%%%%%%%%%%%%%%%%%%%%%%%%%%%%%%%%%%%%%%%%%%%%%%%%%%%%
%%%%%%%%%%%%%%%%%%%%%%%%%%%%%%%%%%%%%%%%%%%%%%%%%%%%%%%%%%%%%%%%%%%%%%%%%%%%%%%%%%%%%%%%%%%%%%%%%%%%%%%%%%%%

\section{Dual version}

\label{Dual version}

A well-known dual equivalent of the union-closed sets conjecture exists (e.g.~\cite{BS}).\\

\noindent
\textbf{Intersection-closed sets conjecture.}
{\em 
For a finite intersection-closed family of sets with at least two member sets, there exists an element
shared by at most half of the member sets.\\
}

One direction of this equivalence is quickly established by considering the complementary, or dual, union-closed system
of a given intersection-closed system, which is obtained from the complements of the members of the 
intersection-closed system with respect to the set of its union. Union-closedness now follows
with De Morgan's laws. Because of the bijection between the systems, the abundant element of the 
union-closed system is the non-abundant one of the intersection-closed system.
The reverse direction is shown with a similar argument, where a union-closed system with at least
two members has to be the starting point.

This dual -- intersection-- version of Frankl's conjecture is typically used to draw the link to lattice
theory (e.g.~\cite{K}), where the meet operation is intersection, and the ordering is given by
set inclusion (see also \cite{BS}).

It should now come as no surprise, that an equivalent dual of Conjecture \ref{con:1} exists.

\begin{conjecture}
\label{con:3}
For a GCD-closed finite set of non-zero natural numbers that contains at least two elements, 
one of its members has a prime power that is not a prime power of more than half of the members.
\end{conjecture}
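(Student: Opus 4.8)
The plan is to establish Conjecture \ref{con:3} by proving it directly equivalent to Conjecture \ref{con:1} (and hence, via Theorem \ref{theo:1}, to the union-closed sets conjecture), using a purely number-theoretic duality rather than a detour through abstract set systems. The device is the reflection map $n\mapsto N/n$, where for a finite $\mathcal{N}\subset\N^+$ one sets $N:=\mathrm{lcm}(\mathcal{N})$. Since every $n\in\mathcal{N}$ divides $N$, each $N/n$ is a natural number and $n\mapsto N/n$ is injective. First I would record, directly from \eqref{eq:3} and \eqref{eq:4}, the two conjugation identities $\mathrm{lcm}(N/m,N/n)=N/\mathrm{gcd}(m,n)$ and $\mathrm{gcd}(N/m,N/n)=N/\mathrm{lcm}(m,n)$, valid whenever $m,n\mid N$. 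These show at once that if $\mathcal{N}$ is GCD-closed then $\mathcal{N}':=\{N/n:n\in\mathcal{N}\}$ is LCM-closed, and, applied with the reference value $\mathrm{lcm}(\mathcal{M})$, that the same construction carries an LCM-closed set $\mathcal{M}$ to a GCD-closed one. Each of the two constructions thus witnesses one implication of the claimed equivalence.

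The heart of the argument is the translation of the counting conditions. For a prime $p=p_i$ and an exponent $k\ge 1$, the statement ``$p^k$ is a prime power of $n$'' means exactly $p^k\mid n$, i.e.\ $q_i(n)\ge k$. Writing $K_i:=q_i(N)$ and $n'=N/n$, one has $q_i(n')=K_i-q_i(n)$, so $q_i(n)\ge k$ holds if and only if $q_i(n')\le K_i-k$, i.e.\ if and only if $p^{\,K_i-k+1}\nmid n'$. Consequently $\#\{n\in\mathcal{N}:p^k\mid n\}=\#\mathcal{N}'-\#\{n'\in\mathcal{N}':p^{\,K_i-k+1}\mid n'\}$, and since $k\mapsto K_i-k+1$ is a bijection of $\{1,\dots,K_i\}$ with itself, no boundary terms are lost. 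Hence ``some prime power is a prime power of at most half of the members of $\mathcal{N}$'' (the conclusion of Conjecture \ref{con:3}) is equivalent to ``some prime power divides at least half of the members of $\mathcal{N}'$''; and because a common divisor larger than $1$ is the same as a common prime factor (as noted after Conjecture \ref{con:1}), and $p^j\mid n'$ implies $p\mid n'$, the latter is exactly the conclusion of Conjecture \ref{con:1} for $\mathcal{N}'$. Running the computation in the opposite direction, starting from a prime factor supplied by Conjecture \ref{con:1}, produces the witnessing maximal prime power $p^{K_i}$ for Conjecture \ref{con:3}.

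It remains to align the hypotheses. For ``Conjecture \ref{con:1} $\Rightarrow$ Conjecture \ref{con:3}'' I would start from a GCD-closed $\mathcal{N}$ with $\#\mathcal{N}\ge 2$; since its elements are distinct divisors of $N$, at most one equals $N$, so some $N/n>1$ and $\mathcal{N}'$ indeed contains an element different from $1$, meeting the hypothesis of Conjecture \ref{con:1}. For the converse I would start from an LCM-closed $\mathcal{M}$ containing an element different from $1$; the case $\#\mathcal{M}=1$ is trivial, and otherwise the dual set has $\ge 2$ elements, as Conjecture \ref{con:3} requires. I expect the main obstacle to be one of careful bookkeeping rather than of ideas: verifying that the witnessing prime power produced in each direction really is a prime power of an actual member (which reduces to $K_i\ge 1$, i.e.\ that the maximal exponent $K_i$ is attained by some member), and confirming that the ``at most half / at least half'' thresholds correspond exactly under $\#\{n:p^k\mid n\}=\#\mathcal{N}'-\#\{n':p^{\,K_i-k+1}\mid n'\}$. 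Once these are settled, the equivalence $\text{Conjecture \ref{con:3}}\Leftrightarrow\text{Conjecture \ref{con:1}}$ follows, and Theorem \ref{theo:1} then identifies Conjecture \ref{con:3} with Frankl's conjecture as well.
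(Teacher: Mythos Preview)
Your proposal is correct. The reflection $n\mapsto N/n$ is precisely the map the paper calls $h$ (Equation \eqref{eq:17}), and your counting identity
\[
\#\{n\in\mathcal{N}:p^k\mid n\}=\#\mathcal{N}'-\#\{n'\in\mathcal{N}':p^{\,K_i-k+1}\mid n'\}
\]
is exactly the mechanism behind Proposition \ref{prop:3qwr}. So your argument is not new to the paper, but it is organised differently from the paper's official proof of the equivalence.

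The paper's route for Corollary \ref{cor:2} is indirect: it shows Conjecture \ref{con:3} $\Leftrightarrow$ intersection-closed sets conjecture by rerunning the $f$/$g$ translation of Theorem \ref{theo:1} with ``gcd'' and ``$\cap$'' in place of ``lcm'' and ``$\cup$'', and then invokes the classical De Morgan duality between the intersection- and union-closed conjectures. Your route bypasses the set-system detour entirely and establishes Conjecture \ref{con:3} $\Leftrightarrow$ Conjecture \ref{con:1} directly via $h$. In effect you have promoted the instance-wise statement of Proposition \ref{prop:3qwr} into a proof of the global equivalence, which is slightly more economical: one duality instead of two, and everything stays inside $\N^+$. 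The paper's approach, on the other hand, makes the parallelism with the Frankl/co-Frankl pair explicit and keeps Theorem \ref{theo:1} and Corollary \ref{cor:2} formally symmetric. Your bookkeeping checks (that $K_i\ge 1$ because $p\mid N/n\mid N$, that $p^{K_i}$ is actually attained in $\mathcal{N}$, and that the $\#\mathcal{M}=1$ case of Conjecture \ref{con:1} is trivial) are exactly the points the paper has to address in the proof of Proposition \ref{prop:3qwr}, and you have handled them correctly.
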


A prime power as in Conjecture \ref{con:3} will be called non-abundant.

\begin{example}
\label{ex:9asx}
{\em
Revisit the GCD-closed set $\mathcal{N}=\{1,2,3,4,6,8,12\}$ of Example \ref{ex:0}, which has seven elements.
The prime powers $2^2$, $2^3$, and $3$ are non-abundant.
}
\end{example}

\begin{corollary}
\label{cor:2}
Conjecture \ref{con:3} is equivalent to the intersection-closed sets conjecture, and thus to
the union-closed sets conjecture, and to Conjecture \ref{con:1}.
\end{corollary}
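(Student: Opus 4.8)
The plan is to mirror the proof of Theorem~\ref{theo:1}, but on the dual side, using the intersection-closed correspondence of Corollary~\ref{cor:1}. Since Theorem~\ref{theo:1} already gives Conjecture~\ref{con:1} $\Leftrightarrow$ union-closed sets conjecture, and the union-closed and intersection-closed sets conjectures are equivalent (as recalled just above Conjecture~\ref{con:3}), it suffices to prove that Conjecture~\ref{con:3} is equivalent to the intersection-closed sets conjecture; the remaining equivalences then follow by transitivity.

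\noindent
For the direction from the intersection-closed sets conjecture to Conjecture~\ref{con:3}: I would start with a nonempty finite GCD-closed $\mathcal{N}\subset\N^+$ with $\#\mathcal{N}\geq 2$. By Corollary~\ref{cor:1}, $\mathcal{S}:=f(\mathcal{N})$ is a finite intersection-closed family, and since $f$ restricted to $\mathcal{N}$ is a bijection onto $f(\mathcal{N})$ (cf.~\eqref{eq:5b}), $\mathcal{S}$ also has at least two member sets. Applying the intersection-closed sets conjecture to $\mathcal{S}$ yields an element --- a prime power $\pi$ in the factorization of some $g(S_0)\in\mathcal{N}$ --- that lies in at most half of the members of $\mathcal{S}$. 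Transporting back via $g$ and using $m|n\Leftrightarrow f(m)\subset f(n)$ from \eqref{eq:9wvi}, membership of $\pi$ in $f(n)$ means exactly that $\pi$ is a prime power of $n$ (i.e.~$\pi \mid n$ with $\pi$ a prime power), so $\pi$ is a prime power of at most half the members of $\mathcal{N}$, witnessed by the member $g(S_0)$. This is precisely the assertion of Conjecture~\ref{con:3}.

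\noindent
For the converse, from Conjecture~\ref{con:3} to the intersection-closed sets conjecture: given a finite intersection-closed family $\mathcal{T}$ with at least two member sets, I would first normalize as in the proof of Theorem~\ref{theo:1} --- identify elements lying in exactly the same member sets, enumerate the universe as $\{1,\ldots,m\}$, and relabel $i\mapsto p_i$ so that every member set becomes a set of distinct primes; this set-theoretical isomorphism preserves intersection-closedness and all incidence frequencies. There is one subtlety compared with the union-closed direction: arbitrary finite sets of primes lie in $f(\N^+)$, but to apply $g$ and Corollary~\ref{cor:1} I need the family itself to be intersection-closed, which it is by construction, and I need the members to be legitimate elements of $f(\N^+)$, which holds because any set of (distinct, first-power) primes is of the form $f(n)$. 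Then $g(\mathcal{T})\subset\N^+$ is GCD-closed by Corollary~\ref{cor:1}, nonempty, and has at least two elements since $g$ is injective on $\mathcal{T}$. Conjecture~\ref{con:3} produces a member $n_0\in g(\mathcal{T})$ with a prime power $p_j$ (here necessarily a prime, by the relabeling, but this does not matter) that is a prime power of at most half the members of $g(\mathcal{T})$; pulling back through $f$ and using \eqref{eq:9wvi} again, the corresponding element lies in at most half the members of $\mathcal{T}$, which is the intersection-closed sets conjecture.

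\noindent
I do not expect a serious obstacle here: the whole argument is bookkeeping through the bijection $f$ and its dual compatibility $f(\gcd)=\bigcap f$ established in Lemma~\ref{lem:1}, combined with the already-proven Theorem~\ref{theo:1} and the standard De~Morgan duality between the union- and intersection-closed versions of Frankl's conjecture. The one point needing mild care is the ``at least two elements / at least two member sets'' bound: I must check that the identification of elements with the same incidence pattern, and the bijectivity of $f$ on the relevant family, do not collapse the cardinality below $2$ --- but since $f$ and $g$ are mutually inverse bijections on $f(\N^+)$, and the element-identification step only affects the universe, not the number of member sets, this is immediate. A clean one-paragraph proof can therefore be written by citing Corollary~\ref{cor:1}, \eqref{eq:5b}, \eqref{eq:9wvi}, Theorem~\ref{theo:1}, and the remarks preceding Conjecture~\ref{con:3}.
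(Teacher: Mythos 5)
Your proposal is correct and follows essentially the same route as the paper: the paper's own proof of Corollary~\ref{cor:2} simply states that the equivalence of Conjecture~\ref{con:3} with the intersection-closed sets conjecture follows ``in very close analogy'' to the proof of Theorem~\ref{theo:1} (replacing union by intersection and LCM by GCD), and then chains through the standard union/intersection duality and Theorem~\ref{theo:1} exactly as you do. You merely spell out the two directions of that analogous argument in more detail than the paper does, with no substantive difference.
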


\begin{proof}
The proof of equivalence for Conjecture \ref{con:3} and the intersection-closed sets conjecture follows 
in very close analogy to the proof of Theorem \ref{theo:1}, with the obviously necessary replacements of 
``union'' by ``intersection'', ``LCM'' by ``GCD'', and ``Conjecture \ref{con:1}'' by ``Conjecture \ref{con:3}''.
\end{proof}

While Conjecture \ref{con:3} has been established as the dual of Conjecture \ref{con:1}, it may still
be of interest to look at e.g.~the specific LCM-closed dual of a given GCD-closed system.
In order to do this, define for any finite $\mathcal{N}\in\N^+$, and with 
\beqa
\hat n & := & \text{lcm}(\mathcal{N}) ,
\eeqa
a function $h$ on $\mathcal{N}$ by
\beqa
\label{eq:17}
h: \;
n = \prod_{i\in\mathbb{N}^+}(p_i)^{q_i(n)} & \longmapsto & \prod_{i\in\mathbb{N}^+}(p_i)^{q_i(\hat n) - q_i(n)} ,
\eeqa
and denote
\beqa
\label{eq:16}
\mathcal{N}^* 
& = & 
h(\mathcal{N}) .
\eeqa

\begin{example}
\label{ex:10asx}
{\em
For the GCD-closed set $\mathcal{N}=\{1,2,3,4,6,8,12\}$ of Example \ref{ex:9asx} (and Ex.~\ref{ex:0}),
$\text{lcm}(\mathcal{N}) = 24 = 2^3\cdot 3$, such that $\mathcal{N}^* = h(\mathcal{N}) = \{2,3,4,6,8,12,24\}$.
For instance,
\beqa
h(1) & = & 2^{3-0}\cdot 3^{1-0} \; = \; 24 ,\\
h(12) & = & h(2^2\cdot 3) \; = \;  2^{3-2}\cdot 3^{1-1} \; = \; 2 .
\eeqa
}
\end{example}

\begin{lemma}
\label{lem:2}
For any finite $\mathcal{N}\in\N^+$, it holds that $\mathcal{N}^*=h(\mathcal{N})\subset\N^+$ and 
\beqa
h: \; \mathcal{N} & \longrightarrow & \mathcal{N}^* 
\eeqa
is a bijection.
\end{lemma}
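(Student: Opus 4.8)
The plan is to show that $h$ maps $\mathcal{N}$ into $\N^+$ and that it is injective and surjective onto its image $\mathcal{N}^*$ — the latter being essentially automatic once the codomain is fixed as $h(\mathcal{N})$, so the real content is well-definedness (landing in $\N^+$) and injectivity. First I would verify well-definedness: for $n\in\mathcal{N}$, since $n\mid\hat n$ by the definition of $\hat n=\text{lcm}(\mathcal{N})$, equivalence \eqref{eq:9wvi} (or directly \eqref{eq:3}) gives $q_i(n)\le q_i(\hat n)$ for every $i\in\N^+$, hence every exponent $q_i(\hat n)-q_i(n)$ appearing in \eqref{eq:17} is a nonnegative integer. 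Moreover, only finitely many of these exponents are nonzero (since $\hat n$ has a finite prime factorization), so the product in \eqref{eq:17} is a genuine element of $\N^+$; this shows $h(n)\in\N^+$ and therefore $\mathcal{N}^*=h(\mathcal{N})\subset\N^+$.

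Next I would establish injectivity. Suppose $m,n\in\mathcal{N}$ with $h(m)=h(n)$. By uniqueness of prime factorization (the injectivity of the exponent map $q$ from \eqref{eq:1}), $q_i(\hat n)-q_i(m)=q_i(\hat n)-q_i(n)$ for all $i\in\N^+$, whence $q_i(m)=q_i(n)$ for all $i$, so $m=n$. Since $h$ is by \eqref{eq:16} defined with codomain $\mathcal{N}^*=h(\mathcal{N})$, surjectivity holds by construction, and injectivity then makes $h:\mathcal{N}\to\mathcal{N}^*$ a bijection. One can also note the explicit inverse: $h(h(n))=\prod_i (p_i)^{q_i(\hat n)-(q_i(\hat n)-q_i(n))}=\prod_i (p_i)^{q_i(n)}=n$, so $h$ restricted to $\mathcal{N}$ is an involution onto $\mathcal{N}^*$ — though strictly speaking $h\circ h$ here means first apply $h$ with reference LCM $\hat n$, then apply (on $\mathcal{N}^*$) the map built from the same $\hat n$; I would phrase this carefully or simply omit it and rely on the injectivity argument, since the lemma only asks for a bijection.

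There is no real obstacle here — the statement is a bookkeeping consequence of the Fundamental Theorem of Arithmetic plus the fact that $n\mid\text{lcm}(\mathcal{N})$ for each $n\in\mathcal{N}$. The only point requiring a word of care is confirming that the exponents $q_i(\hat n)-q_i(n)$ are nonnegative (so that $h(n)$ really is a natural number and not a formal product with negative exponents); everything else is immediate from the injectivity of the representation $q$ in \eqref{eq:1}–\eqref{eq:2}. The proof is therefore short: one sentence for well-definedness via $n\mid\hat n$, one sentence for injectivity via uniqueness of factorization, and the observation that surjectivity onto $\mathcal{N}^*$ is built into definition \eqref{eq:16}.
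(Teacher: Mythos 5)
Your proposal is correct and follows essentially the same route as the paper: non-negativity of the exponents $q_i(\hat n)-q_i(n)$ (via $n\mid\hat n$, i.e.~\eqref{eq:3}) gives $\mathcal{N}^*\subset\N^+$, and injectivity of the exponent map $q$ from \eqref{eq:1} gives injectivity of $h$, with surjectivity onto $h(\mathcal{N})$ automatic. You merely spell out the steps (finitely many nonzero exponents, the cancellation $q_i(m)=q_i(n)$) that the paper's two-sentence proof leaves implicit.
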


\begin{proof}
For all $n\in\mathcal{N}$, the exponents $q_i(\hat n) - q_i(n)$ in \eqref{eq:17} 
are non-negative because of \eqref{eq:3}. Thus, \eqref{eq:16} is a subset of $\N^+$.
Since $q$ of \eqref{eq:1} is an injection,
$h: \mathcal{N} \rightarrow \mathcal{N}^*$
is a bijection. 
\end{proof}

\begin{proposition}
\label{prop:3qwr}
~
\begin{enumerate}
\item
Let $\mathcal{N}\in\N^+$ be a GCD-closed set adhering to Conjecture \ref{con:3}.
Then $\mathcal{N}^*=h(\mathcal{N})$ is an LCM-closed set adhering to Conjecture \ref{con:1}.
\item
Let $\mathcal{N}\in\N^+$ be an LCM-closed set with at least two members adhering to Conjecture \ref{con:1}.
Then $\mathcal{N}^*=h(\mathcal{N})$ is a GCD-closed set adhering to Conjecture \ref{con:3}.
\end{enumerate}
\end{proposition}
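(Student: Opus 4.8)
The plan is to show that the map $h$ from \eqref{eq:17} converts the GCD-closed structure on $\mathcal{N}$ into the LCM-closed structure on $\mathcal{N}^*$ (and vice versa), and that under this conversion a non-abundant prime power becomes an abundant one with the same frequency count, so that adherence to one conjecture transfers to adherence to the other. First I would verify the structural claim. Writing $\hat n = \text{lcm}(\mathcal{N})$, for $m,n\in\mathcal{N}$ one has $q_i(h(m)) = q_i(\hat n) - q_i(m)$, and since $q_i(\text{gcd}(m,n)) = \min\{q_i(m),q_i(n)\}$, the $i$-th exponent of $h(\text{gcd}(m,n))$ is $q_i(\hat n) - \min\{q_i(m),q_i(n)\} = \max\{q_i(\hat n)-q_i(m),\,q_i(\hat n)-q_i(n)\} = \max\{q_i(h(m)),q_i(h(n))\} = q_i(\text{lcm}(h(m),h(n)))$. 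Hence $h(\text{gcd}(m,n)) = \text{lcm}(h(m),h(n))$; since $h$ is a bijection onto $\mathcal{N}^*$ (Lemma \ref{lem:2}), $\mathcal{N}$ GCD-closed implies $\mathcal{N}^*$ LCM-closed. The same identity read backwards (note $h$ is its own inverse-type construction: $\text{lcm}(\mathcal{N}^*)$ divides $\hat n$, and in fact equals $\hat n$ as soon as $\mathcal{N}$ contains $1$ for part 1, or one checks $q_i(\text{lcm}(\mathcal{N}^*)) = q_i(\hat n) - \min_n q_i(n)$, which is the relevant modulus) gives the converse, so an LCM-closed $\mathcal{N}$ with $\ge 2$ members maps to a GCD-closed $\mathcal{N}^*$; I would handle the small bookkeeping about which ambient modulus $h$ uses on $\mathcal{N}^*$ carefully, but in both statements the cardinalities $\#\mathcal{N} = \#\mathcal{N}^*$ are preserved and the "at least two elements / at least one element $\ne 1$" hypotheses match up.

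Next I would match the divisor conditions. Fix a prime $p_i$. For $k\ge 1$, the prime power $(p_i)^k$ is a prime power (in the sense of \eqref{eq:5}, i.e.\ appears in some $f(n)$) of a member $n\in\mathcal{N}$ exactly when $q_i(n)\ge k$; correspondingly $(p_i)^k$ divides $h(n)$ exactly when $q_i(\hat n) - q_i(n) \ge k$, i.e.\ $q_i(n) \le q_i(\hat n) - k$. Setting $\ell = q_i(\hat n) - k + 1$ (so $1\le\ell\le q_i(\hat n)$ whenever $1\le k\le q_i(\hat n)$), the members of $\mathcal{N}$ for which $(p_i)^k$ fails to divide $h(n)$ are precisely those with $q_i(n) \ge \ell$, which are precisely the members of which $(p_i)^\ell$ is a prime power. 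Consequently: "$(p_i)^\ell$ is a prime power of more than half the members of $\mathcal{N}$" is equivalent to "$(p_i)^k$ divides at most half — i.e.\ fewer than half, counting the failures the other way — fails to be a divisor for more than half, hence divides fewer than half". Stated cleanly, the non-abundant prime powers of $\mathcal{N}$ correspond bijectively, via $k\leftrightarrow\ell = q_i(\hat n)-k+1$, to the prime powers that divide a strictly-less-than-half-sized (equivalently, using the complement, to abundant divisors of) subset of $\mathcal{N}^*$. I would spell out the "at least half / at most half" arithmetic so the off-by-one between "$\le$" and "$<$" under complementation lands correctly, using that the complement of a set of size $> \tfrac12\#\mathcal{N}$ has size $< \tfrac12\#\mathcal{N}$.

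Combining: if $\mathcal{N}$ adheres to Conjecture \ref{con:3}, some member has a non-abundant prime power $(p_i)^\ell$; then $(p_i)^k$ with $k = q_i(\hat n)-\ell+1 \ge 1$ divides every member of $\mathcal{N}^*$ except those $h(n)$ with $q_i(n)\ge\ell$, a set of size $> \tfrac12\#\mathcal{N} = \tfrac12\#\mathcal{N}^*$, so $(p_i)^k$ — hence the prime $p_i$ — is a common divisor $>1$ of at least half the members of $\mathcal{N}^*$, giving Conjecture \ref{con:1} for $\mathcal{N}^*$. The reverse direction for part 2 is the mirror image. The main obstacle I anticipate is purely bookkeeping rather than conceptual: ensuring the modulus $\hat n$ used to define $h$ on $\mathcal{N}^*$ (when recovering $\mathcal{N}$) is the right one — i.e.\ that $h\colon\mathcal{N}^*\to(\mathcal{N}^*)^* $ reproduces $\mathcal{N}$ — which requires $q_i(\text{lcm}(\mathcal{N}^*)) = q_i(\hat n)$ for every $i$ appearing, and this is where the hypotheses ("contains $1$", or "at least two members") and the precise inequality for "half" must be invoked with care; everything else reduces to the single exponent identity $\max\{a-x,a-y\} = a - \min\{x,y\}$ applied coordinatewise.
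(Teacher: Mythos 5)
Your structural argument (the coordinatewise identity $\max\{a-x,a-y\}=a-\min\{x,y\}$ giving $h(\gcd(m,n))=\mathrm{lcm}(h(m),h(n))$, and its mirror) and your treatment of part 1 follow the paper's proof: a prime power $(p_i)^\ell$ that is a prime power of at least one but at most half of the members of $\mathcal{N}$ dualizes to $(p_i)^{q_i(\hat n)-\ell+1}$, whose exponent is $\geq 1$ precisely because $\ell\leq q_i(\hat n)$, and which divides at least half of $\mathcal{N}^*$. (Minor slip: the exceptional set $\{h(n):q_i(n)\geq\ell\}$ has size at most, not more than, $\tfrac12\#\mathcal{N}$; your final ``at least half'' is nevertheless the right conclusion.)

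The genuine gap is in part 2, which you dismiss as ``the mirror image.'' It is not: the two conjectures are asymmetric in exactly the respect that matters here. Conjecture \ref{con:1} hands you a prime power $(p_j)^{q_j^*}$ dividing at least half the members of $\mathcal{N}$, but places no upper bound on how many it divides; if it divides \emph{all} of them, its dual $(p_j)^{q_j(\hat n)-q_j^*+1}$ is a prime power of \emph{no} member of $\mathcal{N}^*$, whereas Conjecture \ref{con:3} explicitly requires the exhibited prime power to be a prime power of one of the members. Concretely, for $\mathcal{N}=\{2,6\}$ the abundant prime $2$ divides both members and dualizes to nothing in $\mathcal{N}^*=\{1,3\}$; one must instead locate the abundant-but-not-universal prime power $3$. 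So you must prove that an LCM-closed $\mathcal{N}$ with at least two members satisfying Conjecture \ref{con:1} has an abundant prime power failing to divide at least one member. The paper supplies this missing step by observing that $\mathcal{N}/\gcd(\mathcal{N})$ is again LCM-closed with at least two members, applying Conjecture \ref{con:1} there (where the gcd is $1$, so no prime divides every member), and transporting the resulting prime power back by multiplying by the corresponding prime power in $\gcd(\mathcal{N})$. Relatedly, the ``main obstacle'' you flag --- whether $(\mathcal{N}^*)^*$ recovers $\mathcal{N}$ --- is a red herring for this proposition: nothing requires inverting $h$ on $\mathcal{N}^*$, only that $\mathcal{N}^*$ itself be closed under the dual operation and satisfy the dual conjecture.
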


\begin{proof}
1. 
By Lemma \ref{lem:2}, $\mathcal{N}^*$ is finite and has more than one element (since it has
as many as $\mathcal{N}$, which has at least two), thus containing an element larger than 1.
Consider now two elements $m^*, n^*\in\mathcal{N}^*$ with pre-images $m, n\in\mathcal{N}$ under $h$. 
By \eqref{eq:17},
\beqa
q_i(m^*) 
& = & 
q_i(h(m)) \; = \; q_i(\hat n) - q_i(m) , \\
q_i(n^*) 
& = & 
q_i(h(n)) \;\; = \; q_i(\hat n) - q_i(n) .
\eeqa
Since $ \text{gcd}(m,n)\in\mathcal{N}$, one obtains with \eqref{eq:3} and \eqref{eq:4} that
\beqa
\text{lcm}(m^*, n^*) 
& = & 
\prod_{i\in\mathbb{N}^+}(p_i)^{\max\{q_i(m^*),q_i(n^*)\}} \\
\nonumber
& = & 
\prod_{i\in\mathbb{N}^+}(p_i)^{q_i(\hat n) - \min\{q_i(m), q_i(n)\}} \\
\nonumber
& = & 
h\left(
\prod_{i\in\mathbb{N}^+}(p_i)^{\min\{q_i(m), q_i(n)\}}
\right) \\
\nonumber
& = & 
h(\text{gcd}(m,n)) \; \in \; \mathcal{N}^* ,
\eeqa
proving that $\mathcal{N}^*$ is LCM-closed.
Let $(p_j)^{q^*_j}$ be a prime power that occurs (as in: divides)
 in at least one member of $\mathcal{N}$, but not in more than half of them. 
This means that for at least half of the members $n\in\mathcal{N}$, it holds that $q_j(n) < q^*_j$. But this implies
that for at least half of the members $h(n)\in\mathcal{N}^*$, it holds that $q_j(h(n)) > q(\hat n) -q^*_j \geq 0$,
meaning that $\mathcal{N}^*$ has with $(p_j)^{q(\hat n) -q^*_j+1}$
an abundant prime power and, therefore, an abundant divisor.\\
2.
Up to the point, where $\mathcal{N}^*$ emerges as GCD-closed, the proof of the second statement follows in close analogy
the one of the first statement.
Let now $(p_j)^{q^*_j}$ be a prime power in at least half the members of $\mathcal{N}$, but not in all of them. 
At this point, the question arises, if such a prime power exists, as it -- at first sight  -- could be that all abundant prime powers
occurred in all members. However, that this cannot be the case follows from the easy to check fact that with
$\mathcal{N}$, also $\mathcal{N}/\text{gcd}(\mathcal{N})$ is LCM-closed, and $n \mapsto n/\text{gcd}(\mathcal{N})$ is
an LCM-consistent bijection between the two sets. Since Conjecture \ref{con:1} then also applies to
$\mathcal{N}/\text{gcd}(\mathcal{N})$, it is clear that there would be the required type of prime power in this set, which,
by multiplication with the prime power of the same basis (prime) in $\text{gcd}(\mathcal{N})$, would deliver the $(p_j)^{q^*_j}$
with the required properties.
This means that for at least half of the members $n\in\mathcal{N}$, it holds that $q_j(n) \geq q^*_j$, implying 
that for at least half of the members $h(n)\in\mathcal{N}^*$, it holds that $q_j(h(n)) \leq q(\hat n) - q^*_j$.
However, since  $(p_j)^{q^*_j}$ is not a prime power in all members of $\mathcal{N}$, the prime power
$(p_j)^{q(\hat n) -q^*_j +1}$ does exist in (as in: divides) members of $\mathcal{N}^*$, but in at most half of them.
\end{proof}

\begin{example}
\label{ex:11asx}
{\em
This is an example for the first statement in Prop.~\ref{prop:3qwr}.
The GCD-closed set $\mathcal{N}=\{1,2,3,4,6,8,12\}$ of Example \ref{ex:0} and \ref{ex:9asx} has seven elements,
and the prime powers $2^2$, $2^3$, and $3$ are non-abundant. Moreover, $\text{lcm}(\mathcal{N}) = 24 = 2^3\cdot 3$.
In Example \ref{ex:10asx}, $\mathcal{N}^* = h(\mathcal{N}) = \{2,3,4,6,8,12,24\}$ was determined.
It can easily be checked that $\mathcal{N}^*$ is LCM-closed, and that it has the abundant prime powers (divisors)
$2^2 = 2^{3-2+1}$, $2 = 2^{3-3+1}$, and $3 = 3^{1-1+1}$.
}
\end{example}

\begin{definition}
Let $\mathcal{N}\in\N^+$ be a non-empty finite GCD-closed (LCM-closed) set.
Then $\mathcal{N}^*$ as in \eqref{eq:16} is called its LCM-closed (GCD-closed) dual.
\end{definition}

Note that \eqref{eq:3} and \eqref{eq:4} provide the
minimum and the maximum of the occurring power exponents of the prime $p_i$ in the members of $\mathcal{N}$
as $q_i(\text{gcd}(\mathcal{N}))$ and $q_i(\text{lcm}(\mathcal{N}))$. It follows with
\eqref{eq:17} that the minimum and the maximum of the occurring power exponents of the prime $p_i$ in the 
members of $\mathcal{N}^*$ are $0$ and $q_i(\text{lcm}(\mathcal{N})) - q_i(\text{gcd}(\mathcal{N}))$.
As was pointed out in the proof of Proposition \ref{prop:3qwr} for GCD-closed $\mathcal{N}$, observe that for
LCM-closed (GCD-closed) $\mathcal{N}$, the set $\mathcal{N}/\text{gcd}(\mathcal{N})$ is again LCM-closed (GCD-closed), and 
\beqa
(\mathcal{N}/\text{gcd}(\mathcal{N}))^*
& = &
\mathcal{N}^* , 
\eeqa
since, on the right hand side of \eqref{eq:17},
the division of $\mathcal{N}$ by $\text{gcd}(\mathcal{N})$ simply reduces $q_i(\hat n)$ and $q_i(n)$ both by
$q_i(\text{gcd}(\mathcal{N}))$, thus -- in total -- resulting in no change to $\mathcal{N}^*$. 
Therefore, 
\beqa
(\mathcal{N}^*)^* 
& = &
\mathcal{N}
\eeqa
holds if and only if $\text{gcd}(\mathcal{N})=1$, and, thus, $\text{lcm}(\mathcal{N}) = \text{lcm}(\mathcal{N}^*)$.

\begin{example}
\label{ex:12asx}
{\em
In Example \ref{ex:11asx}, $\text{gcd}(\mathcal{N})=\text{gcd}(\mathcal{N}^*)=1$ and $\mathcal{N} = (\mathcal{N}^*)^*$.
However, in Example \ref{ex:4asx}, 
\beqa
\mathcal{N}
& = & 
\{6,10,14,30,42,70,210\} ,
\eeqa
and $\mathcal{N}$ is LCM-closed, but not GCD-closed, 
$\text{gcd}(\mathcal{N}) = 2$, and $\text{lcm}(\mathcal{N}) = 210 = 2\cdot 3\cdot 5\cdot 7$. This implies
\beqa
\mathcal{N}
& = & 
\{6,10,14,30,42,70,210\} ,\\
\mathcal{N}/\text{gcd}(\mathcal{N})
& = &
\{3,5,7,15,21,35,105\} ,\\
\mathcal{N}^* \; = \; (\mathcal{N}/\text{gcd}(\mathcal{N}))^*
& = &
\{1,3,5,7,15,21,35\} ,
\eeqa
and, since $\text{lcm}(\mathcal{N}^*) = 105$,
\beqa
(\mathcal{N}^*)^*
& = &
\mathcal{N}/\text{gcd}(\mathcal{N})
\; = \;
\{3,5,7,15,21,35,105\} .
\eeqa
}
\end{example}

%%%%%%%%%%%%%%%%%%%%%%%%%%%%%%%%%%%%%%%%%%%%%%%%%%%%%%%%%%%%%%%%%%%%%%%%%%%%%%%%%%%%%%%%%%%%%%%%%%%%%%%%%%%%
%%%%%%%%%%%%%%%%%%%%%%%%%%%%%%%%%%%%%%%%%%%%%%%%%%%%%%%%%%%%%%%%%%%%%%%%%%%%%%%%%%%%%%%%%%%%%%%%%%%%%%%%%%%%

\section{Examples of scope}

\label{Examples of scope}

The sum of the prime power exponents of a natural number $n\in\N^+$ is given by
\beqa
\sigma_{\text{PPE}}(n)
& = & 
\sum_{i\in\mathbb{N}^+} q_i(n) .
\eeqa
With the findings so far, and returning to the list at the end of Section \ref{Frankl's union-closed sets conjecture},
Conjecture \ref{con:1} for instance holds for an LCM-closed finite set $\mathcal{N}\subset\N^+$
that contains at least one element larger than 1, if:
\begin{enumerate}
\item
There is an $n\in\mathcal{N}$ with $n\neq 1$, such that the prime factorization of $n$ has a maximum of two prime factors.
\item
$\sigma_{\text{PPE}}(\text{lcm}(\mathcal{N})/\text{gcd}(\mathcal{N})) \leq 12$.
\item
$\#\mathcal{N} \leq 50$.
\item
$\#\mathcal{N} \geq \frac{2}{3}2^{\sigma_{\text{PPE}}(\text{lcm}(\mathcal{N})/\text{gcd}(\mathcal{N}))}$.
\item
$\#\mathcal{N} \leq 2(\sigma_{\text{PPE}}(\text{lcm}(\mathcal{N})/\text{gcd}(\mathcal{N})))$, if $\mathcal{N}$ is such that for 
any two distinct prime powers (for instance, $2^2$ and $2^3$ are two distinct prime powers in $24$)
$p'$ and $p''$ in $\text{lcm}(\mathcal{N})/\text{gcd}(\mathcal{N})$,
there exist $m,n\in\mathcal{N}/\text{gcd}(\mathcal{N})$ such that $p'|m$, $p''|n$, but neither $p''|m$, nor $p'|n$.
\end{enumerate}

%%%%%%%%%%%%%%%%%%%%%%%%%%%%%%%%%%%%%%%%%%%%%%%%%%%%%%%%%%%%%%%%%%%%%%%%%%%%%%%%%%%%%%%%%%%%%%%%%%%%%%%%%%%%
%%%%%%%%%%%%%%%%%%%%%%%%%%%%%%%%%%%%%%%%%%%%%%%%%%%%%%%%%%%%%%%%%%%%%%%%%%%%%%%%%%%%%%%%%%%%%%%%%%%%%%%%%%%%

\section{Conclusion}

The long-standing union-closed sets conjecture is known for its applications in graph and lattice theory.
This note presents a link of the conjecture to another mathematical field: multiplicative number theory.
It would be nice if the here presented equivalent number theoretical conjecture or methods turned out
to be fruitful in any way.

%%%%%%%%%%%%%%%%%%%%%%%%%%%%%%%%%%%%%%%%%%%%%%%%%%%%%%%%%%%%%%%%%%%%%%%%%%%%%%%%%%%%%%%%%%%%%%%%%%%%%%%%%%%%
%%%%%%%%%%%%%%%%%%%%%%%%%%%%%%%%%%%%%%%%%%%%%%%%%%%%%%%%%%%%%%%%%%%%%%%%%%%%%%%%%%%%%%%%%%%%%%%%%%%%%%%%%%%%

%%%%%%%%%%%%%%%%%%%%%%%%%%%%%%%%%%%%%%%%%%%%%%%%%%%%%%%%%%%%%%%%%%%%%%%%%%%%%%%%%%%%%%%%%%%%
%%%%%%%%%%%%%%%%%%%%%%%%%%%%%%%%%%%%%%%%%%%%%%%%%%%%%%%%%%%%%%%%%%%%%%%%%%%%%%%%%%%%%%%%%%%%

\end{document}